\newcommand{\Ocal}{\mathcal{O}}
\numberwithin{equation}{section}
\newtheorem{theorem}{Theorem}[section]
\newtheorem{proposition}[theorem]{Proposition}
\newtheorem{lemme}[theorem]{Lemma}
\theoremstyle{definition}
\newtheorem{remq}[theorem]{Remark}
\newcommand{\Cbb}{\mathbb{C}}
\newcommand{\Mscr}{\mathscr{M}}
\newcommand{\ifm}{\mathrm{if}}
\newcommand{\cond}{\mathrm{cond}}
\newcommand{\PGL}{\mathrm{PGL}}
\newcommand{\GL}{\mathrm{GL}}
\newcommand{\Brm}{\mathrm{\mathbf{B}}}
\newcommand{\Trm}{\mathrm{T}}
\newcommand{\Nrm}{\mathrm{\mathbf{N}}}
\newcommand{\Tr}{\mathrm{Tr}}
\newcommand{\Lcal}{\mathcal{L}}
\newcommand{\Zcal}{\mathcal{Z}}
\newcommand{\amf}{\textfrak{a}}
\newcommand{\Abf}{\mathbf{A}}
\newcommand{\omegabar}{\overline{\omega}}
\newcommand{\Abb}{\mathbb{A}}
\newcommand{\Cscr}{\mathscr{C}}
\newcommand{\pmf}{\mathfrak{p}}
\newcommand{\qmf}{\mathfrak{q}}
\newcommand{\varphibar}{\overline{\varphi}}
\newcommand{\Zrm}{\mathrm{\mathbf{Z}}}
\newcommand{\Prm}{\mathrm{\mathbf{P}}}
\newcommand{\Bscr}{\mathscr{B}}
\newcommand{\Nm}{\mathrm{\mathbf{N}}}
\newcommand{\Grm}{\mathrm{\mathbf{G}}}
\newcommand{\Zbb}{\mathbb{Z}}
\newcommand{\Xbf}{\mathbf{X}}
\newcommand{\Rbb}{\mathbb{R}}
\newcommand{\mmf}{\mathfrak{m}}
\newcommand{\Qbb}{\mathbb{Q}}
\newcommand{\F}{\mathrm{\mathbf{F}}}
\newcommand{\Krm}{\mathrm{\mathbf{K}}}
\newcommand{\Erm}{\mathrm{E}}
\newcommand{\Wcal}{\mathcal{W}}
\newcommand{\crm}{\mathrm{c}}
\newcommand{\pibar}{\overline{\pi}}
\newcommand{\Lscr}{\mathscr{L}}
\newcommand{\Wbf}{\mathrm{\mathbf{W}}}
\title{Periods and Reciprocity I}
\author{Raphaël Zacharias \\ \\  \'Ecole polythechnique f\'ed\'erale \\ de Lausanne}
\email{raphael.zacharias@epfl.ch}
\date{
    \today
}
\begin{document}

\maketitle

\begin{abstract}
Given $\F$ a number field with ring of integers $\Ocal_\F$ and $\pmf,\qmf$ two coprime and squarefree ideals of $\Ocal_\F$, we prove a reciprocity relation for the first moment of the triple product $L$-functions $L(\pi\otimes\pi_1\otimes\pi_2,\frac{1}{2})$ twisted by $\lambda_\pi(\pmf)$, where $\pi_1$ and $\pi_2$ are fixed unitary automorphic representations of $\PGL_2(\Abb_\F)$ with $\pi_1$ cuspidal and $\pi$ runs through unitary automorphic representations of conductor dividing $\qmf$. The method uses adelic integral representations of $L$-functions and the symmetric identity is established for a particular period. Finally, the integral period is connected to the moment via Parseval formula.
\end{abstract}

\tableofcontents

\section{Introduction}
Reciprocity formulae in the context of automorphic $L$-functions have been studied first for the second moment of Dirichlet $L$-functions $L(\chi,\frac{1}{2})$ modulo $q$ twisted by $\chi(p)$ by Conrey \cite{conrey}, Young \cite{young} and Bettin \cite{bettin}. In \cite{blomer}, Blomer and Khan considered a moment of the form $\sum_{f \ \mathrm{level \ } q}L(f\times F,s)L(f,w)\lambda_f(p)$, where $F$ is a fixed automorphic form for the group $\mathrm{SL}_3(\Zbb)$, $p,q$ are two prime numbers and $\lambda_f(p)$ is the $p^{\mathrm{th}}$ Hecke eigenvalue of $f$. Their result is non trivial even in the case $p=q=1$ since it also transforms the parameters $s$ and $w$. Their formula is a first contribution to the rich theory of automorphic forms on $\GL_4\times\GL_2$ over $\Qbb$ in the special situation where the $\GL_4$ function is an isobaric $3+1$-sum. We also mention the work of Blomer, Li and Miller \cite{blomer4} obtaining an identity involving the first moment of $L(\Pi\times f,\frac{1}{2})$ where this time $\Pi$ is a self-dual cusp form on $\GL_4$ and $f$ runs over $\GL_2$ modular forms.

\vspace{0.1cm}

In this paper, we provide an adelic treatment of the recent work of Andersen and Kiral \cite{andersen} who proved a reciprocity relation for the second moment of Rankin-Selberg $L$-functions $\sum_{f \ \mathrm{level} \ q}L(f\times g,\tfrac{1}{2})^2\lambda_f(p)\eta_f(q)$ with $g$ a fixed automorphic form for $\mathrm{SL}_2(\Zbb)$ and $\eta_f(q)=\pm 1$ is the eigenvalue of the Fricke involution. This is again a special case where the $\GL_4$ form is an isobaric $2+2$-sum, namely $g\boxplus g$. They obtained the result by means of classical tools in the analytic theory of automorphic forms; namely the Kuznetsov trace formula. At the end, the relation that exchanges the two prime numbers $p$ and $q$ is established by an identity of sums of Kloosterman sums. The main advantage in using the adelic language is that we can deal in a uniform way with arbitrary number fields and the symmetric relation appears in a very transparent way in some integral period (c.f. \eqref{DefinitionLcal}). 

\vspace{0.1cm}

Let  $\F$ be a number field. We write $\Ocal_\F$ for the ring of integers, $\Abb_\F$ for its adele ring and we let $\pmf,\qmf$ be two squarefree and coprime ideals of $\Ocal_\F$. We fix $\pi_1,\pi_2$ two unitary automorphic representations of $\PGL_2(\Abb_\F)$ which are unramified at all finite places and we assume that $\pi_1$ is cuspidal. We will prove a symmetric identity for the twisted first moment of the central value of the triple product $L$-functions, roughly of the shape (see Theorem \ref{Theorem} for the precise formulation)
\begin{equation}\label{Eq1}
\sum_{\substack{\pi \\ \cond(\pi)|\qmf}}L(\pi\otimes\pi_1\otimes\pi_2,\tfrac{1}{2})\lambda_\pi(\pmf)\eta_\pi(\qmf)\rightsquigarrow \sum_{\substack{\pi  \\ \cond(\pi)|\pmf}}L(\pi\otimes\pi_1\otimes\pi_2,\tfrac{1}{2})\lambda_\pi(\qmf)\eta_\pi(\pmf),
\end{equation}
where $\lambda_\pi(\pmf)$ is the eigenvalue of the Hecke operator $\Trm_\pmf$ and $\eta_\pi(\qmf)\in\{-1,+1\}$ denotes the Atkin-Lehner eigenvalue at $\qmf$.

\subsection{Some remarks}\label{SectionRemark}
\noindent 1) We recover the result of Andersen and Kiral if we choose $\pi_2$ to be the Eisenstein series $1\boxplus 1$ since in this case, $L(\pi\otimes\pi_1\otimes\pi_2,\tfrac{1}{2})=L(\pi\otimes\pi_1,\tfrac{1}{2})^2$.

\vspace{0.2cm}

\noindent 2) This method cannot be applied directly to the case where $\pi_1=\pi_2$ is Eisenstein, giving a reciprocity formula for a fourth moment of Hecke $L$-functions, because it is not $L^2$. However it is possible to handle this difficulty by using a regularized version of the inner product, valid for not necessarily square integrable automorphic forms, as developed by Michel and Venkatesh in \cite{subconvexity} and more recently by H. Wu \cite{Han}. This is the content of the forthcoming paper \cite{II}, in which we recover the same subconvex exponent as \cite[Theorem 4]{blomer}, namely $\tfrac{1}{4}-\tfrac{1-2\theta}{24}$, but extended to an arbitrary number field $\F$.

\vspace{0.2cm}

\noindent 3) We mention the work in progress by Ramon Nunes to give an adelic treatment of the paper \cite{blomer}, i.e. the case where $4$ decomposes as $3+1$.

\vspace{0.2cm}

\noindent 4) The method employed in this paper is very flexible and it should be possible to extend the result to higher rank automorphic forms, i.e. to $\GL_{2n}\times\GL_n$ at least when the form on $\GL_{2n}$ is an isobaric $n+n$-sum.

\subsection{Acknowledgment} I would like to thank my Phd supervisor Philippe Michel for his valuable advice. I am also grateful to the referee for his careful remarks and propositions who led to a significant simplification of the paper.

\section{Automorphic Preliminaries}

\subsection{Notations and conventions}\label{SectionNotations}

\subsubsection{Number fields} In this paper, $\F/\mathbb{Q}$ will denote a fixed number field with ring of intergers $\Ocal_\F$ and discriminant $\Delta_\F$. We make the assumption that all prime ideals considering in this paper do not divide $\Delta_\F$. We let $\Lambda_\F$ be the complete $\zeta$-function of $\F$; it has a simple pole at $s=1$ with residue $\Lambda_\F^*(1)$.

\subsubsection{Local fields} For $v$ a place of $\F$, we set $\F_v$ for the completion of $\F$ at the place $v$. We will also write $\F_\pmf$ if $v$ is finite place that corresponds to a prime ideal $\pmf$ of $\Ocal_\F$. If $v$ is non-Archimedean, we write $\Ocal_{\F_v}$ for the ring of integers in $\F_v$ with maximal $\mmf_v$ and uniformizer $\varpi_v$. The size of the residue field is $q_v=\Ocal_{\F_v}/\mmf_v$. For $s\in\Cbb$, we define the local zeta function $\zeta_{\F_v}(s)$ to be $(1-q_v^{-s})^{-1}$ if $v<\infty$, $\zeta_{\F_v}(s)=\pi^{-s/2}\Gamma(s/2)$ if $v$ is real and $\zeta_{\F_v}(s)=2(2\pi)^{-s}\Gamma(s)$ if $v$ is complex.

\subsubsection{Adele ring} The adele ring of $\F$ is denoted by $\Abb_\F$ and its unit group $\Abb^\times_\F$. We also set $\widehat{\Ocal_\F}=\prod_{v<\infty}\Ocal_{\F_v}$ for the profinite completion of $\Ocal_\F$ and $\Abb^1_\F=\{ x\in \Abb_\F^\times \ : \ |x|=1\}$, where $|\cdot| : \Abb_\F^\times \rightarrow \Rbb_{>0}$ is the adelic norm map.

\subsubsection{Additive characters} We denote by $\psi = \prod_v \psi_v$ the additive character $\psi_\Qbb\circ \Tr_{\F/\Qbb}$ where $\psi_\Qbb$ is the additive character on $\Qbb\setminus \Abb_\Qbb$ with value $e^{2\pi i x}$ on $\Rbb$. For $v<\infty$, we let $d_v$ be the conductor of $\psi_v$ : this is the smallest non-negative integer such that $\psi_v$ is trivial on $\mmf_v^{d_v}$. We have in this case $\Delta_\F=\prod_{v<\infty}q_v^{d_v}$. We also set $d_v=0$ for $v$ Archimedean.

\subsubsection{Subgroups} Let $\Grm=\GL_2$; if $R$ is a commutative ring, $\Grm(R)$ is by definition the group of $2\times 2$ matrices with coefficients in $R$ and determinant in $R^*$. We also defined the following standard subgroups
$$\Brm(R)=\left\{\left(\begin{matrix} a & b \\ & d\end{matrix}\right) \ : \ a,d\in R^*, b\in R\right\}, \ \Prm(R)= \left\{\left(\begin{matrix} a & b \\ & 1\end{matrix}\right) \ : \ a\in R^*, b\in R\right\}, \ $$
$$\Zrm(R)=\left\{\left(\begin{matrix} z & \\ & z\end{matrix}\right) \ : \ z\in R^*\right\}, \ \Abf(R)=\left\{\left(\begin{matrix} a &  \\ & 1\end{matrix}\right) \ : \ a\in R^*\right\}, $$ $$\Nm(R)=\left\{\left(\begin{matrix} 1 & b \\ & 1\end{matrix}\right) \ : \ b\in R\right\}.$$
We also set
$$n(x)=\left( \begin{matrix}
1 & x \\ & 1
\end{matrix}\right), \hspace{0.4cm}w = \left(\begin{matrix} & 1 \\ -1 & \end{matrix}\right) \hspace{0.4cm} \mathrm{and} \hspace{0.4cm} a(y)=\left( \begin{matrix}
y & \\ & 1
\end{matrix}\right).
$$
For any place $v$, we let $\Krm_v$ be the maximal compact subgroup of $\Grm(\F_v)$ defined by
\begin{equation}\label{Compact}
\Krm_v= \left\{ \begin{array}{lcl}
\Grm(\Ocal_{\F_v}) & \ifm & v \ \mathrm{is \ finite} \\
 & & \\
\mathrm{O}_2(\Rbb) & \ifm & v \ \mathrm{is \ real} \\
 & & \\
\mathrm{U}_2(\mathbb{C}) & \ifm & v \ \mathrm{is \ complex}.
\end{array}\right.
\end{equation}
We also set $\Krm:= \prod_v \Krm_v$. If $v<\infty$ and $n\geqslant 0$, we define 
$$\Krm_{v,0}(\varpi_v^n)= \left\{ \left( \begin{matrix}
a & b \\ c & d
\end{matrix}\right)\in \Krm_v \ : \  c\in \mmf_v^n\right\}.$$
If $\amf$ is an ideal of $\Ocal_\F$ with prime decomposition $\amf=\prod_{v<\infty}\pmf_v^{f_v(\amf)}$ ($\pmf_v$ is the prime ideal corresponding to the finite place $v$), then we set
$$\Krm_0(\amf)=\prod_{v<\infty} \Krm_{v,0}(\varpi_v^{f_v(\amf)}).$$

\subsubsection{Measures}\label{SectionMeasure} We use the same measures normalizations as in \cite[Sections 2.1,3.1]{subconvexity}. At each place $v$, $dx_v$ denotes a self-dual measure on $\F_v$ with respect to $\psi_v$. If $v<\infty$, $dx_v$ gives the measure $q_v^{-d_v/2}$ to $\Ocal_{\F_v}$. We define $dx=\prod_v dx_v$ on $\Abb_\F$. We take $d^\times x_v=\zeta_{\F_v}(1)\frac{dx_v}{|x_v|}$ as the Haar measure on the multiplicative group $\F_v^\times$ and $d^\times x = \prod_v d^\times x_v$ as the Haar measure on the idele group $\Abb^\times_\F$.

\vspace{0.1cm}

We provide $\Krm_v$ with the probability Haar measure $dk_v$. We identify the subgroups $\Zrm(\F_v)$, $\Nrm(\F_v)$ and $\Abf(\F_v)$ with respectively $\F_v^\times,$ $\F_v$ and $\F_v^\times$ and equipped them with the measure $d^\times z$, $dx_v$ and $d^\times y_v$. Using the Iwasawa decomposition, namely $\Grm(\F_v)=\Zrm(\F_v)\Nm(\F_v)\Abf(\F_v)\Krm_v$, a Haar measure on $\Grm(\F_v)$ is given by 
\begin{equation}\label{HaarMeasure}
dg_v = d^\times z dx_v \frac{d^\times y_v}{|y_v|}dk_v.
\end{equation} 
The measure on the adelic points of the various subgroups are just the product of the local measures defined above. We also denote by $dg$ the quotient measure on $$\Xbf:= \Zrm(\Abb_\F)\Grm(\F)\setminus \Grm(\Abb_\F),$$ 
with total mass $\mathrm{vol}(\Xbf)<\infty$.

\subsection{Automorphic forms and representations}\label{SectionAutomorphicRepresentations}
Let $L^2(\Xbf)$ be the Hilbert space of square integrable functions $\varphi : \Xbf\rightarrow \Cbb$. The $L^2$-norm is denoted by
\begin{equation}\label{L^2normCuspidal}
||\varphi||_{L^2(\Xbf)}^2 = \int_\Xbf |\varphi (g)|^2dg.
\end{equation}
We denote by $L_{\mathrm{cusp}}^2(\Xbf)$ the closed subspace of cusp forms, i.e. the functions $\varphi\in L^2(\Xbf)$ with the additional property that 
$$\int_{\F\setminus \Abb_\F}\varphi(n(x)g)dg=0, \ \ \mathrm{a.e.} \ g\in\Grm(\Abb_\F).$$
Each $\varphi\in L^2_{\mathrm{cusp}}(\Xbf)$ admits a Fourier expansion
\begin{equation}\label{FourierSeries}
\varphi(g)= \sum_{\alpha\in \F^\times}W_\varphi (a(\alpha)g),
\end{equation}
with
\begin{equation}\label{Whittaker-Cuspidal}
W_\varphi(g)=\int_{\F\setminus \Abb_\F}\varphi(n(x)g)\psi(-x)dx.
\end{equation}
The group $\Grm(\Abb_\F)$ acts by right translations on both spaces $L^2(\Xbf)$ and $L_{\mathrm{cusp}}^2(\Xbf)$ and the resulting representation is unitary with respect to \eqref{L^2normCuspidal}. It is well known that each irreducible component $\pi$ decomposes into $\pi = \otimes_v \pi_v$ where $\pi_v$ are irreducible and unitary representations of the local groups $\Grm(\F_v)$. The spectral decomposition is established in the first four chapters of \cite{analytic} and gives the orthogonal decomposition
\begin{equation}\label{OrthogonalDecomposition}
L^2(\Xbf)=L^2_{\mathrm{cusp}}(\Xbf)\oplus L^2_{\mathrm{res}}(\Xbf)\oplus L^2_{\mathrm{cont}}(\Xbf).
\end{equation}
$L^2_{\mathrm{cusp}}(\Xbf)$ decomposes as a direct sum of irreducible $\Grm(\Abb_\F)$-representations which are called the cuspidal automorphic representations. $L^2_{\mathrm{res}}(\Xbf)$ is the sum of all one dimensional subrepresentations of $L^2(\Xbf)$. Finally the continuous part $L^2_{\mathrm{cont}}(\Xbf)$ is a direct integral of irreducible $\Grm(\Abb_\F)$-representations and it is expressed via the Eisenstein series. In this paper, we call the irreducible components of $L^2_{\mathrm{cusp}}$ and $L^2_{\mathrm{cont}}$ the $\mathbf{unitary}$ automorphic representations. If $\pi$ is a unitary representation appearing in the continuous part, we say that $\pi$ is Eisenstein.

\vspace{0.1cm}

For any ideal $\amf$ of $\Ocal_\F$, we write $L^2(\Xbf,\amf):= L^2(\Xbf)^{\Krm_0(\amf)}$ for the subspace of level $\amf$ automorphic forms : this is the closed subspace of functions that are invariant under the subgroup $\Krm_0(\amf)$.

\subsection{Invariant inner product on the Whittaker model}\label{SectionInvariant} Let $\pi=\otimes_v\pi_v$ be a unitary automorphic representation of $\PGL_2(\Abb_\F)$ as defined in Section \ref{SectionAutomorphicRepresentations}. The intertwiner 
\begin{equation}\label{NaturalIntertwiner}
\pi\ni \varphi\longmapsto W_\varphi(g)=\int_{F\setminus \Abb}\varphi(n(x)g)\psi(-x)dx,
\end{equation}
realizes a $\GL_2(\Abb_\F)$-equivariant embedding of $\pi$ into a space of functions $W : \GL_2(\Abb_\F)\rightarrow \Cbb$ satisfying $W(n(x)g))=\psi(x)W(g)$. The image is called the Whittaker model of $\pi$ with respect to $\psi$ and it is denoted by $\Wcal(\pi,\psi)$. This space has a factorization $\otimes_v \Wcal(\pi_v,\psi_v)$ into local Whittaker models of $\pi_v$. A pure tensor $\otimes_v \varphi_v$ has a corresponding decomposition $\prod_v W_{\varphi_v}$ where $W_{\varphi_v}(1)=1$ and is $\Krm_v$-invariant for almost all place $v$. 

\vspace{0.1cm}

We define a normalized inner product on the space $\Wcal(\pi_v,\psi_v)$ by the rule 
\begin{equation}\label{NormalizedInnerProduct}
\vartheta_v(W_v,W_v') :=\zeta_{\F_v}(2)\frac{\int_{\F_v^\times}W_v(a(y))\overline{W}_v'(a(y))d^\times y}{\zeta_{\F_v}(1)L(\pi_v,\mathrm{Ad},1)}.
\end{equation}
This normalization has the good property that $\vartheta_v(W_v,W_v)=1$ for $\pi_v$ and $\psi_v$ unramified and $W_v(1)=1$ \cite[Proposition 2.3]{classification}. We also fix for each place $v$ an invariant inner product $\langle \cdot,\cdot\rangle_v$ on $\pi_v$ and an equivariant isometry $\pi_v \rightarrow \Wcal(\pi_v,\psi_v)$ with respect to \eqref{NormalizedInnerProduct}.

\vspace{0.1cm}

Recall that if $\pi$ is a cuspidal representation, we have a unitary structure on $\pi$ given by \eqref{L^2normCuspidal}. If $\pi$ belongs to the continuous spectrum and $\varphi$ is the Eisenstein series associated to a section $f : \GL_2(\Abb_\F)\rightarrow \Cbb$ in an induced representation of $\Brm(\Abb_\F)$ (see for example \cite[Section 4.1.6]{subconvexity} for the basic facts and notations concerning Eisenstein series), we can define the norm of $\varphi$ by setting 
$$||\varphi||^2_{\mathrm{Eis}}:= \int_{\Krm}|f(k)|^2dk.$$
We define the canonical norm of $\varphi$ by
\begin{equation}\label{CanonicalNorm}
||\varphi||^2_{\mathrm{can}} := \left\{ \begin{array}{lcl}
||\varphi||_{L^2(\Xbf)}^2 & \ifm & \pi \ \mathrm{is \ cuspidal} \\
 & & \\
2\Lambda_\F^*(1) ||\varphi||_{\mathrm{Eis}}^2 & \ifm & \pi \ \mathrm{is \ Eisenstein},
\end{array}\right.
\end{equation}
Using \cite[Lemma 2.2.3]{subconvexity}, we can compare the global and the local inner product : for $\varphi=\otimes_v \varphi_v \in \pi=\otimes_v\pi_v$ a pure tensor with $\pi$ either cuspidal or Eisenstein and non-singular, i.e. $\pi=\chi_1\boxplus\chi_2$ with $\chi_i$ unitary, $\chi_1\chi_2=1$ and $\chi_1\neq\chi_2,$ we have
\begin{equation}\label{Comparition}
||\varphi||_{\mathrm{can}}^2=2 \Delta_\F^{1/2} \Lambda^*(\pi,\mathrm{Ad},1)\prod_v \langle \varphi_{v},\varphi_v\rangle_v,
\end{equation}
where $\Lambda(\pi,\mathrm{Ad},s)$ is the complete adjoint $L$-function $\prod_v L(\pi,\mathrm{Ad},s)$ and $\Lambda^*(\pi,\mathrm{Ad},1)$ is the first nonvanishing coefficient in the Laurent expansion around $s=1$.

\subsection{Integral representations of triple product $L$-functions}\label{SectionRankin} 

Let $\pi_1,\pi_2,\pi_3$ be three unitary automorphic representations of $\PGL_2(\Abb_\F)$ such that at least one of them is cuspidal, say $\pi_2.$ Consider the linear functional on $\pi_1\otimes\pi_2\otimes\pi_3$ defined by
$$I (\varphi_1\otimes\varphi_2\otimes \varphi_3):= \int_\Xbf \varphi_1(g)\varphi_2(g)\varphi_3(g)dg.$$
This period is closely related to the central value of the triple product $L$-function $L(\pi_1\otimes\pi_2\otimes\pi_2,\tfrac{1}{2})$. In order the state the result, we write $\pi_i=\otimes_v \pi_{i,v}$ and for each $v$, we can consider the matrix coefficient
\begin{equation}\label{DefinitionMatrixCoefficient}
I'_v(\varphi_{1,v}\otimes\varphi_{2,v}\otimes\varphi_{3,v}) :=\int_{\PGL_2(\F_v)}\prod_{i=1}^3\langle \pi_{i,v}(g_v)\varphi_{i,v},\varphi_{i,v}\rangle_v dg_v. 
\end{equation}
It is a fact that \cite[(3.27)]{subconvexity} 
\begin{equation}\label{Fact}
\frac{I'(\varphi_{1,v}\otimes\varphi_{2,v}\otimes\varphi_{3,v})}{\prod_{i=1}^3 \langle \varphi_{i,v},\varphi_{i,v}\rangle_v}= \zeta_{\F_v}(2)^2 \frac{L(\pi_{1,v}\otimes\pi_{2,v}\otimes\pi_{3,v},\tfrac{1}{2})}{\prod_{i=1}^3 L(\pi_{i,v},\mathrm{Ad},1)},
\end{equation}
when $v$ is non-Archimedean and all vectors are unramified. It is therefore natural to consider the normalized version
\begin{equation}\label{DefinitionNormalizedMatrixCoefficient}
I_v(\varphi_{1,v}\otimes \varphi_{2,v}\otimes \varphi_{3,v}) := \zeta_{\F_v}(2)^{-2} \frac{\prod_{i=1}^3 L(\pi_{i,v},\mathrm{Ad},1)}{L(\pi_{1,v}\otimes\pi_{2,v}\otimes\pi_{3,v},\tfrac{1}{2})} I'_v (\varphi_{1,v}\otimes \varphi_{2,v},\varphi_{3,v}).
\end{equation}
The following proposition connects the global trilinear form $I$ with the central value $L(\pi_1\otimes\pi_2\otimes\pi_3,\tfrac{1}{2})$ and the local matrix coefficients $I_v$. The proof when at least one of the $\pi_i$'s is Eisenstein can be found in \cite[(4.21)]{subconvexity} and is a consequence of the Rankin-Selberg method. The result when all $\pi_i$ are cuspidal is due to Ichino \cite{ichino}.
\begin{proposition}\label{PropositionIntegralRepresentation} Let $\pi_1,\pi_2,\pi_3$ be unitary automorphic representations of $\PGL_2(\Abb_\F)$ such that at least one of them is cuspidal. Let $\varphi_i = \otimes_v \varphi_{i,v}\in \otimes_v \pi_{i,v}$ be pure tensors and set $\varphi :=\varphi_1\otimes\varphi_2\otimes\varphi_3$.
\begin{enumerate}[$a)$]
\item If none of the $\pi_i$'s is a singular Eisenstein series, then
\begin{equation}\label{Connection1}
\frac{|I(\varphi)|^2}{\prod_{i=1}^3 ||\varphi_i||^2_{\mathrm{can}}} = \frac{C}{8\Delta_\F^{3/2}}\cdot\frac{\Lambda(\pi_1\otimes\pi_2\otimes\pi_3,\tfrac{1}{2})}{\prod_{i=1}^3 \Lambda^*(\pi_i,\mathrm{Ad},1)}\prod_v \frac{I_v(\varphi_v)}{\prod_{i=1}^3\langle \varphi_{i,v},\varphi_{i,v}\rangle_v},
\end{equation}
where 
\begin{equation}\label{Constant}
C=\left\{ \begin{array}{ccl}
\Lambda_\F(2) & \ifm & \mathrm{all \ }\pi_i \ \mathrm{are \ cuspidal} \\ 
& & \\
 1 & \ifm & \mathrm{at \ least \ one \ is \ Eisenstein \ and \ nonsingular.}
\end{array}\right.
\end{equation}
\item
Assume that $\pi_3=1\boxplus 1$ and let $\varphi_3$ be the Eisenstein associated to the section $f_3(0)\in 1\boxplus 1$ which, for $\Re e (s)>0$, is defined by
$$f_3(g,s):= |\det(g)|^s\int_{\Abb_\F^\times} \Phi((0,t)g)|t|^{1+2s}d^\times t \in |\cdot|^s\boxplus |\cdot|^{-s},$$
where $\Phi=\prod_v \Phi_v$ with $\Phi_v = \mathbf{1}_{\Ocal_{\F_v}}^2$ for finite $v$. Then
\begin{equation}\label{Connection2}
\frac{|I(\varphi)|^2}{\prod_{i=1}^2 ||\varphi_i||^2_{\mathrm{can}}} = \frac{1}{4\Delta_\F}\cdot \frac{\Lambda(\pi_1\otimes\pi_2\otimes\pi_3,\tfrac{1}{2})}{\prod_{i=1}^2\Lambda^*(\pi_i,\mathrm{Ad},1)}\prod_v \frac{I_v(\varphi_{1,v}\otimes\varphi_{2,v}\otimes f_{3,v})}{\prod_{i=1}^2\langle \varphi_{i,v},\varphi_{i,v}\rangle_v}.
\end{equation}
\end{enumerate}
\end{proposition}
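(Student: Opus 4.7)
The plan is to reduce both parts to established results in the literature and then carefully match normalizations. The argument splits naturally into three subcases: $(i)$ all three $\pi_i$ are cuspidal, $(ii)$ exactly one $\pi_i$ is a nonsingular Eisenstein series, and $(iii)$ $\pi_3 = 1\boxplus 1$ is singular, as in part $(b)$.

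For subcase $(i)$, I would invoke Ichino's formula \cite{ichino} directly. It expresses $|I(\varphi)|^2/\prod_i \langle \varphi_i,\varphi_i\rangle$ as an explicit global constant times $\Lambda(\pi_1\otimes\pi_2\otimes\pi_3,\tfrac{1}{2})/\prod_i \Lambda^*(\pi_i,\mathrm{Ad},1)$ times a product of local trilinear forms. Converting $||\varphi_i||^2_{\mathrm{can}}$ into local inner products via \eqref{Comparition} and normalizing the local matrix coefficients as in \eqref{DefinitionNormalizedMatrixCoefficient}, one recovers \eqref{Connection1} with $C = \Lambda_\F(2)$.

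For subcases $(ii)$ and $(iii)$, the tool is Rankin--Selberg unfolding. Taking $\pi_3$ to be the Eisenstein representation and writing $\varphi_3(g) = \sum_{\gamma \in \Brm(\F)\backslash \Grm(\F)} \phi_3(\gamma g, s)$ in the region of absolute convergence (with $\phi_3$ the given flat section for case $(ii)$ or $f_\Phi$ for case $(iii)$), one collapses the sum over $\gamma$ with the quotient defining $\Xbf$. Substituting the Whittaker expansion \eqref{FourierSeries} of the cusp form $\varphi_1$ (or $\varphi_2$) then unfolds the integral further, producing an Eulerian product over places of local Rankin--Selberg zeta integrals. By Jacquet--Piatetski-Shapiro--Shalika, each local integral equals the local triple product $L$-factor $L(\pi_{1,v}\otimes\pi_{2,v}\otimes\pi_{3,v}, s+\tfrac{1}{2})$ up to local adjoint $L$-factors; see \cite[Section 4.1.6]{subconvexity} for the detailed calculation. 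Specializing to $s = 0$ recovers the central value, yielding \eqref{Connection1} with $C = 1$ in case $(ii)$. In case $(iii)$, the factorization $\Lambda(\pi_1\otimes\pi_2\otimes(1\boxplus 1),\tfrac{1}{2}) = \Lambda(\pi_1\otimes\pi_2,\tfrac{1}{2})^2$ produces \eqref{Connection2}, and the simpler constant $1/(4\Delta_\F)$ reflects the absence of the $\Lambda^*(1\boxplus 1,\mathrm{Ad},1)$ factor in the denominator, which would be infinite.

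The hard part is not the unfolding itself, which is standard, but the meticulous bookkeeping of the various normalization constants: the factors $\Delta_\F^{1/2}$ coming from the self-duality of the Haar measures, the $\zeta_{\F_v}$ factors built into \eqref{NormalizedInnerProduct} and \eqref{DefinitionNormalizedMatrixCoefficient}, the local $L(\pi_v,\mathrm{Ad},1)$ terms, and the residue $\Lambda_\F^*(1)$ hidden in the Eisenstein canonical norm \eqref{CanonicalNorm} must all combine precisely to yield the clean constants $\Lambda_\F(2)/(8\Delta_\F^{3/2})$ and $1/(4\Delta_\F)$ respectively.
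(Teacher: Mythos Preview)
Your proposal is correct and follows essentially the same route as the paper: the paper does not give an independent proof but simply cites Ichino \cite{ichino} for the fully cuspidal case and refers to \cite[(4.21)]{subconvexity} for the Eisenstein cases as a consequence of the Rankin--Selberg method, exactly as you outline. Your additional remarks on the unfolding mechanics and the bookkeeping of normalization constants are accurate elaborations of what those references contain.
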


\subsection{The spectral decomposition}\label{SectionSpectral}
\subsubsection{The Parseval formula}\label{SectionParseval} We state here a version of Parseval Formula with level restriction. Let $\amf$ be an ideal of $\Ocal_\F$. We start with the orthogonal decomposition which is a direct consequence of \eqref{OrthogonalDecomposition}
$$L^2(\Xbf,\amf)=L^{2}_{\mathrm{cusp}}(\Xbf,\amf)\oplus L^{2}_{\mathrm{res}}(\Xbf,\amf)\oplus L^2_{\mathrm{cont}}(\Xbf,\amf).$$
From the classical orthogonal decomposition of the space of cusp forms into a direct sum, we obtain
$$L_{\mathrm{cusp}}^2(\Xbf,\amf):= L_{\mathrm{cusp}}^2(\Xbf)^{\Krm_0(\amf)}=\bigoplus_{\pi \ \mathrm{cuspidal}}\pi^{\Krm_0(\amf)}.$$
Observe that the cuspidal representations $\pi$ with $\pi^{\Krm_0(\amf)}\neq 0$ are those whose conductor (in the sense of \cite[Section 3.1.2]{cogdell}) divides $\amf$. For each such $\pi$, we let $\Bscr(\pi,\amf)$ be an orthonormal basis of $\pi^{\Krm_0(\amf)}$. The space $L^2_{\mathrm{res}}(\Xbf,\amf)$ is the orthogonal direct sum \cite[Proposition 6.1]{kuznetsov}
$$\bigoplus_{\omega^2=1}\Cbb \cdot \phi_\omega, \ \ \phi_\omega(g):= \omega(\det(g)).$$
The parametrization of the continuous part $L^{2}_{\mathrm{cont}}(\Xbf,\amf)$ passes by the induced representations $\omega|\cdot|^{it}\boxplus\overline{\omega}|\cdot|^{-it}$ of $\Brm(\Abb_\F)$ 
for $\omega$ a character of $\F^\times\setminus \Abb_\F^{1}$ with conductor satisfying $\cond(\omega)^2 | \amf$, $t\in\Rbb$ and it is made through the Eisenstein series. For such a $\omega$, we let $\Bscr(\omega,\amf)$ be an orthonormal basis of the space of $\Krm_0(\amf)$-invariant vectors in $\omega\boxplus\overline{\omega}$. It is then possible, using a $\Krm$-equivariant isomorphism $\omega\boxplus\omegabar \rightarrow \omega|\cdot|^{it}\boxplus\omegabar|\cdot|^{-it}$ \cite[Section 5]{kuznetsov}, to obtain a basis $\Bscr_{it}(\omega,\amf)$ of $\Krm_0(\amf)$-invariant vectors in $\omega|\cdot|^{it}\boxplus\overline{\omega}|\cdot|^{-it}$ for every $t\in\Rbb$. Then the Parseval formula holds \cite[Theorem 6.2]{kuznetsov}.
\begin{proposition}\label{PropositionParseval} Let $f,g\in L^2(\Xbf,\amf)$ and put $c:=\mathrm{vol}(\Xbf)^{-1}>0$. Then
\begin{alignat*}{1}
\langle f,g\rangle = & \ \sum_{\substack{\pi \ \mathrm{cuspidal} \\ \cond(\pi) |  \amf}}\sum_{\psi \in \Bscr(\pi,\amf)}\langle f,\psi\rangle \overline{\langle g,\psi\rangle}+c\sum_{\omega^2=1}\langle f,\phi_\omega\rangle\overline{\langle g,\phi_\omega \rangle} \\ 
 + & \ \frac{1}{4\pi}\sum_{\substack{\omega \in \widehat{\F^\times\setminus\Abb_\F^{1}} \\ \cond(\omega)^2 | \amf}}\int_{-\infty}^{+\infty}\sum_{\phi_{it}\in\Bscr_{it}(\omega,\amf)}\langle f,\Erm(\cdot,\phi_{it})\rangle\overline{\langle g,\Erm(\cdot,\phi_{it})\rangle}dt.
\end{alignat*}
\end{proposition}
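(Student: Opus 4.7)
The plan is to deduce this identity from the unrestricted Parseval formula \cite[Theorem 6.2]{kuznetsov} by projecting onto $\Krm_0(\amf)$-invariants. The key observation I will use is that if $f\in L^2(\Xbf,\amf)$ and $\psi\in L^2(\Xbf)$, then $\langle f,\psi\rangle=\langle f,\Pi_\amf\psi\rangle$, where $\Pi_\amf$ denotes the orthogonal projection $L^2(\Xbf)\to L^2(\Xbf,\amf)$, and similarly for $g$. Consequently, when expanding $\langle f,g\rangle$ spectrally against any orthonormal basis of $L^2(\Xbf)$, only the basis vectors lying in $L^2(\Xbf,\amf)$ produce a nonvanishing contribution. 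Thus in each of the three spectral components it suffices to extend a chosen orthonormal basis of the $\Krm_0(\amf)$-invariant part to a full basis of the ambient piece and read off the surviving terms.

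Applied component by component, this yields the three sums in the statement. A cuspidal representation $\pi$ will contribute iff $\pi^{\Krm_0(\amf)}\neq 0$, which by the standard local theory of new vectors is equivalent to $\cond(\pi)\mid\amf$; the resulting inner basis is then the chosen $\Bscr(\pi,\amf)$. In the residual spectrum, $\phi_\omega(g)=\omega(\det g)$ lies in $L^2(\Xbf,\amf)$ exactly when $\omega$ is trivial on $\det\Krm_0(\amf)=\widehat{\Ocal_\F}^\times$, i.e. when $\omega$ is unramified at every finite place; this is the implicit restriction on the middle sum. For the continuous part, I would use the $\Krm$-equivariant isomorphism $\omega\boxplus\overline{\omega}\to\omega|\cdot|^{it}\boxplus\overline{\omega}|\cdot|^{-it}$ recalled in Section \ref{SectionSpectral} to transport a fixed orthonormal basis $\Bscr(\omega,\amf)$ simultaneously to all fibres, producing the measurable family $\Bscr_{it}(\omega,\amf)$; the local theory of new vectors for unitary principal series then shows that $(\omega\boxplus\overline{\omega})^{\Krm_0(\amf)}\neq 0$ iff $\cond(\omega)^2\mid\amf$, pinning down the outer index set.

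The main (mild) obstacle will be to ensure that this basis-extension argument is compatible with the direct integral decomposition underlying the continuous spectrum, since the Eisenstein piece is a genuine integral in $t$ rather than a sum. The $t$-independence of the above $\Krm$-equivariant isomorphism is precisely what makes the extension globally measurable in $t$; once that is checked, the remainder is a formal consequence of \cite[Theorem 6.2]{kuznetsov}.
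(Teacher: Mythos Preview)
Your argument is correct and is essentially the natural derivation of the level-restricted statement from the full spectral decomposition. Note, however, that the paper does not give a proof of this proposition at all: it simply records the statement and cites \cite[Theorem 6.2]{kuznetsov} directly, so there is no ``paper's own proof'' to compare against beyond that citation. Your proposal thus amounts to unpacking what that citation contains (or how to reduce to it), and the steps you outline --- projecting onto $\Krm_0(\amf)$-invariants, invoking newvector theory to identify which $\pi$ and which $\omega$ survive, and using the $\Krm$-equivariant trivialisation in $t$ to transport bases along the continuous fibres --- are exactly the right ones. Your remark that the residual sum over $\omega^2=1$ carries an implicit restriction to everywhere-unramified $\omega$ is also correct; in the paper's formulation the superfluous terms simply vanish since $\langle f,\phi_\omega\rangle=0$ for $f\in L^2(\Xbf,\amf)$ and $\omega$ ramified.
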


\subsubsection{Hecke operators}\label{SectionHecke} It is enough for our purpose to restrict ourselves to Hecke operators of prime level because the Hecke operators are multiplicative and only squarefree ideals are considered in this paper. Let $\pmf\in\mathrm{Spec}(\Ocal_\F)$ and let $p:= |\Ocal_{\F_\pmf}/(\varpi_\pmf)|$ be the size of the residue field. We define the normalized Hecke operator $\Trm_\pmf$ as follows : for $f\in \mathscr{C}^\infty(\Grm(\Abb_\F))$, we set
\begin{equation}\label{DefinitionHeckeOperator}
\Trm_\pmf(f)(g):= \frac{1}{p^{1/2}}\int_{H_\pmf}f(gh)dh,
\end{equation}
where $H_\pmf$ is the double coset
\begin{equation}\label{DoubleCoset}
H_\pmf = \Krm_\pmf \left( \begin{matrix}
\varpi_\pmf & \\ & 1
\end{matrix}\right) \Krm_\pmf\subset \Grm(\F_{\pmf}),
\end{equation}
and the function $h\in H_\pmf\mapsto f(gh)$ has to be understood under the inclusion $\Grm(\F_\pmf)\hookrightarrow \Grm(\Abb_\F)$, $x\mapsto (1,...,1\underbrace{,x,}_{v=\pmf}1,...)$.
\section{Periods}
Through this section, we fix $\pi_1,\pi_2$ two unitary automorphic representation of $\PGL_2(\Abb_\F)$ with the properties that $\pi_1$ is cuspidal, both are unramified at all finite places and $\pi_2$ is not isomorphic to a quadratic twist $\pi_1\otimes\omega$. By multiplicativity of the Hecke operators and the fact that all manipulations are made \og place by place \fg, there is no loss in generality in restricting  to prime ideals. We thus fix $\pmf,\qmf$ two distinct elements of $\mathrm{Spec}(\Ocal_\F)$ of respective norm $p$ and $q$. We also assume for simplicity that $\pmf$ and $\qmf$ do not divide the discriminant $\Delta_\F$ of $\F.$

\subsection{A symmetric map}  We consider the $\GL_2(\Abb_\F)$-invariant map $\mathscr{P}_{\qmf,\pmf} : \pi_1\otimes\pi_2\longrightarrow \Cbb$ given by
\begin{equation}\label{DefinitionLcal}
\mathscr{P}_{\qmf,\pmf}(\varphi_1,\varphi_2):= \int_{\Xbf}\varphi_1^{\qmf\pmf}\varphi_2^{\pmf}\varphibar_1\varphibar_2^{\qmf},
\end{equation}
where we write $\varphi^\qmf$ for the right translate of $\varphi$ by the matrix $\left(\begin{smallmatrix} 1 & \\ & \varpi_\qmf\end{smallmatrix} \right)\in\GL_2(\F_\qmf)$ embedded in $\GL_2(\Abb_\F)$ as in § \ref{SectionHecke} and $\varpi_\qmf$ is a uniformizer of the local ring $\Ocal_{\F_\qmf}$. Observe that since we are dealing with $\PGL_2(\Abb_\F)$-representations we have $\pi_i=\pibar_i$ and thus, we get the symmetric relation (in terms of $\pmf$ and $\qmf$)
\begin{equation}\label{Symmetry}
\mathscr{P}_{\qmf,\pmf}(\varphi_1,\varphi_2)=\mathscr{P}_{\pmf,\qmf}(\varphi_1,\varphibar_2).
\end{equation}
This map will be connected via Parseval formula to a moment of automorphic $L$-functions (see Section \ref{SectionApplyParseval}).

\subsection{Invariant trilinear functionals} Let $\pi$ be a unitary automorphic representation of $\PGL_2(\Abb_\F)$ with finite conductor $\qmf$ and let $\pi_1,\pi_2$ as above. We have the following easy but very usefull proposition.
\begin{proposition}\label{PropositionTrilinear} Let $\ell : \pi\otimes\pi_1\otimes\pi_2\longrightarrow \Cbb$ be a $\GL_2(\Abb_\F)$-invariant trilinear functional and let $\varphi_i\in\pi_i$, $i=1,2,$ be vectors which are invariant under the subgroups $\Krm_\pmf$ and $\Krm_\qmf$. We let $\varphi$ be a $\Krm_0(\qmf)$-invariant vector in $\pi$. Then 
\begin{equation}\label{PropHecke}
\frac{p+1}{p^{1/2}}\ell(\varphi,\varphi_1^{\qmf\pmf},\varphi_2^{\pmf})=\lambda_\pi(\pmf)\ell(\varphi,\varphi_1^{\qmf},\varphi_2)
\end{equation}
and
\begin{equation}\label{PropAtkin}
\ell(\varphi,\varphi_1,\varphi_2^\qmf)=\eta_\pi(\qmf)\ell(\varphi,\varphi_1^\qmf,\varphi_2),\end{equation}
where $\lambda_\pi(\pmf)\in\Rbb$ is the eigenvalue of $\Trm_\pmf$ and $\eta_\pi(\qmf)\in\{-1,+1\}$ is the eigenvalue of the Atkin-Lehner operator.
\end{proposition}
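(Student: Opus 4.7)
The plan is to exploit the $\GL_2(\Abb_\F)$-invariance of $\ell$ to shuttle right translations between the three slots, combined with the fact that $\varphi$ is an eigenvector of $\Trm_\pmf$ (for (a)) and of the Atkin-Lehner involution at $\qmf$ (for (b)).

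For part (a), I would start from the eigenvalue identity
\begin{equation*}
\lambda_\pi(\pmf)\varphi = \frac{1}{p^{1/2}}\sum_{i}\pi(h_i)\varphi,
\end{equation*}
where $\{h_i\}$ is a set of representatives for the $p+1$ left cosets of $H_\pmf/\Krm_\pmf$. A convenient explicit choice is $h_u = n(u) g_0 = \left(\begin{smallmatrix}\varpi_\pmf & u \\ & 1\end{smallmatrix}\right)$ for $u$ running through $\Ocal_{\F_\pmf}/\varpi_\pmf$ together with $h_\infty = w g_0 = \left(\begin{smallmatrix} & 1 \\ -\varpi_\pmf & \end{smallmatrix}\right)$, where $g_0 = \left(\begin{smallmatrix}\varpi_\pmf & \\ & 1\end{smallmatrix}\right)$. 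Evaluating $\ell(\,\cdot\,,\varphi_1^\qmf,\varphi_2)$ and transferring each $\pi(h_i)$ to the other two slots by invariance yields
\begin{equation*}
\lambda_\pi(\pmf)\,\ell(\varphi,\varphi_1^\qmf,\varphi_2) = \frac{1}{p^{1/2}}\sum_{i}\ell\bigl(\varphi,\pi_1(h_i^{-1})\varphi_1^\qmf,\pi_2(h_i^{-1})\varphi_2\bigr).
\end{equation*}
The key local fact is that, in $\PGL_2(\F_\pmf)$, each $h_i^{-1}$ factors as $\left(\begin{smallmatrix}1 & \\ & \varpi_\pmf\end{smallmatrix}\right) k_i$ with $k_i\in\Krm_\pmf$: one checks $h_u^{-1}=\left(\begin{smallmatrix}1 & \\ & \varpi_\pmf\end{smallmatrix}\right) n(-u)$ and $h_\infty^{-1}=\left(\begin{smallmatrix}1 & \\ & \varpi_\pmf\end{smallmatrix}\right) w$, the latter via the identity $w\left(\begin{smallmatrix}\varpi_\pmf & \\ & 1\end{smallmatrix}\right) = \left(\begin{smallmatrix}1 & \\ & \varpi_\pmf\end{smallmatrix}\right) w$ in $\GL_2$. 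Since $\varphi_1^\qmf$ and $\varphi_2$ are $\Krm_\pmf$-invariant, each $k_i$ is absorbed and every summand collapses to the same value $\ell(\varphi,\varphi_1^{\qmf\pmf},\varphi_2^\pmf)$, yielding the relation (a).

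For part (b), I would use the Atkin-Lehner element $w_\qmf = \left(\begin{smallmatrix}1 & \\ & \varpi_\qmf\end{smallmatrix}\right) w = \left(\begin{smallmatrix} & 1 \\ -\varpi_\qmf & \end{smallmatrix}\right)$ at the place $\qmf$, which acts on the one-dimensional space $\pi^{\Krm_0(\qmf)}$ by $\eta_\pi(\qmf)\in\{\pm 1\}$ (since $w_\qmf^2 = -\varpi_\qmf I$ is central). Using $w\in\Krm_\qmf$ and the unramifiedness of $\pi_2$ at $\qmf$, I rewrite $\varphi_2^\qmf = \pi_2(w_\qmf)\varphi_2$, so that
\begin{equation*}
\ell(\varphi,\varphi_1,\varphi_2^\qmf) = \ell(\varphi,\varphi_1,\pi_2(w_\qmf)\varphi_2) = \ell\bigl(\pi(w_\qmf^{-1})\varphi,\pi_1(w_\qmf^{-1})\varphi_1,\varphi_2\bigr)
\end{equation*}
by invariance with $g = w_\qmf^{-1}$. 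The first factor equals $\eta_\pi(\qmf)\varphi$, and an identical manipulation (using $\pi_1(w)\varphi_1 = \varphi_1$) gives $\pi_1(w_\qmf^{-1})\varphi_1 = \varphi_1^\qmf$, establishing the relation (b).

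The main obstacle is the combinatorial matching in (a): the representatives $h_i$ must be chosen so that all inverses $h_i^{-1}$ land in the \emph{same} left coset $\left(\begin{smallmatrix}1 & \\ & \varpi_\pmf\end{smallmatrix}\right)\Krm_\pmf$ of $\PGL_2(\F_\pmf)$, collapsing $p+1$ summands into one. The Bruhat-type choice above accomplishes this via the $\PGL_2$-identity swapping $\left(\begin{smallmatrix}\varpi_\pmf & \\ & 1\end{smallmatrix}\right)$ and $\left(\begin{smallmatrix}1 & \\ & \varpi_\pmf\end{smallmatrix}\right)$ by $w$-conjugation, and it is precisely this collapse that produces the factor $(p+1)/p^{1/2}$.
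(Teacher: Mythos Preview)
Your proof is correct and follows essentially the same approach as the paper's: for \eqref{PropHecke} you transfer the Hecke operator from $\varphi$ to $\varphi_1,\varphi_2$ via the $\GL_2(\Abb_\F)$-invariance of $\ell$ and collapse all $p+1$ cosets using the $\Krm_\pmf$-invariance of $\varphi_1,\varphi_2$, and for \eqref{PropAtkin} you transfer the Atkin-Lehner element across the slots. The only cosmetic difference is that you work with explicit coset representatives $h_u,h_\infty$ where the paper argues uniformly over the double coset $H_\pmf$, and in part (b) you package the move as a single translation by $w_\qmf$ whereas the paper inserts $w$ in two steps; neither changes the substance of the argument.
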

\begin{proof}
Writing $\varphi^h$ for the right translate of $\varphi$ by $h\in H_\pmf$ (c.f. \eqref{DoubleCoset}), we have
\begin{alignat*}{1}
\lambda_\pi(\pmf)\ell\left(\varphi,\varphi_1^\qmf,\varphi_2\right)=\ell\left(\Trm_\pmf(\varphi),\varphi_1^\qmf,\varphi_2\right)=\frac{1}{p^{1/2}}\int_{H_\pmf}\ell\left( \varphi^h,\varphi_1^\qmf,\varphi_2\right) dh.
\end{alignat*}
Using successively the right $\Krm_\pmf$-invariance of $\varphi,$ the $\GL_2(\Abb_\F)$-invariance of $\ell$, the $\Zrm(\Abb_\F)$ and $\Krm_\pmf$-invariance of $\varphi_1,\varphi_2$, we see that for any $h\in H_\pmf$, we have
$$\left( \varphi^h,\varphi_1^\qmf,\varphi_2\right) =\ell\left(\varphi,\varphi_1^{\pmf\qmf},\varphi_2^{\pmf}\right),$$
which together with $\deg(\Trm_\pmf)=p+1$ implies the first assertion.

Finally, using the fact that $\varphi$ is $\Krm_0(\qmf)$-invariant, we have $\left(\begin{smallmatrix} & -1 \\ \varpi_\qmf & \end{smallmatrix}\right)\cdot\varphi = \eta_\pi(\qmf)\varphi$ and the $\Krm_\qmf$-invariance of $\varphi_1$ gives
\begin{alignat*}{1}
\ell(\varphi,\varphi_1,\varphi_2^\qmf)= & \ \ell\left(\left(\begin{smallmatrix} \varpi_\qmf & \\ & 1 \end{smallmatrix}\right)\cdot\varphi,\left(\begin{smallmatrix} \varpi_\qmf & \\ & 1\end{smallmatrix} \right)\cdot\varphi_1,\varphi_2\right) \\ 
 = & \ \ell\left(w\left(\begin{smallmatrix} \varpi_\qmf & \\ & 1 \end{smallmatrix}\right)\cdot\varphi,w\left(\begin{smallmatrix} \varpi_\qmf & \\ & 1\end{smallmatrix} \right)w\cdot\varphi_1,\varphi_2\right) \\
  = & \ \ell \left(\left(\begin{smallmatrix} & -1 \\ \varpi_\qmf & \end{smallmatrix}\right)\cdot\varphi, \varphi_1^{\qmf},\varphi_2\right) = \eta_\pi(\qmf)\ell(\varphi,\varphi_1^\qmf,\varphi_2),
\end{alignat*}
which completes the proof of this proposition.
\end{proof}

\subsection{Applying Parseval formula}\label{SectionApplyParseval} Let $\varphi_i=\otimes_v\varphi_{i,v}\in\pi_i=\otimes_{v}\pi_{i,v}$ be $\GL_2(\widehat{\Ocal}_\F)$-invariant vectors. We assume that $\langle \varphi_{i,v},\varphi_{i,v}\rangle_v=1$ for all $v$ and $i=1,2$ if $\pi_2$ is not $1\boxplus 1$. If $\pi_2=1\boxplus 1$, we choose $\varphi_3$ to be the Eisenstein series appearing in Proposition \ref{PropositionIntegralRepresentation} b). Applying Parseval formula \ref{PropositionParseval} to the period \eqref{DefinitionLcal} in the space $L^2(\Xbf,\qmf)$ (since $\varphi_1\varphi_2^{\qmf}$ is $\Krm_0(\qmf)$-invariant \footnote{In fact, Proposition \ref{PropositionParseval} asserts that we have to decompose our forms in the space $L^2(\Xbf,\pmf\qmf)$ but using the fact this space is the direct sum of $L^2(\Xbf,\qmf)$ with its orthogonal complement, we can restrict to the subspace $L^2(\Xbf,\qmf)$}) yields the spectral decomposition
\begin{equation}
\begin{split}
\mathscr{P}_{\qmf,\pmf}(\varphi_1,\varphi_2)= & \ \sum_{\substack{\pi \ \mathrm{cuspidal} \\ \crm(\pi) | \qmf}}\sum_{\psi\in\Bscr(\pi,\qmf)}\langle \varphi_1^{\pmf\qmf}\varphi_2^\pmf,\psi\rangle\overline{\langle\varphi_1\varphi_2^{\qmf},\psi\rangle} \\ + & \ \frac{1}{\mathrm{vol}(\Xbf)}\sum_{\omega^2=1}\langle \varphi_1^{\pmf\qmf}\varphi_2^{\pmf},\phi_\omega\rangle\overline{\langle \varphi_1\varphi_2^{\qmf},\phi_\omega \rangle} \\ 
 + & \ \frac{1}{4\pi}\sum_{\substack{\omega \in \widehat{\F^\times\setminus\Abb_\F^{1}} \\ \crm(\omega)^2 | \qmf}}\int_{-\infty}^{+\infty}\sum_{\phi_{it}\in\Bscr_{it}(\omega,\qmf)}\langle \varphi_1^{\pmf\qmf}\varphi_2^{\pmf},\Erm(\cdot,\phi_{it})\rangle\overline{\langle \varphi_1\varphi_2^{\qmf},\Erm(\cdot,\phi_{it})\rangle}dt.
\end{split}
\end{equation}
We now connect the expansion of $\mathscr{P}_{\pmf,\qmf}(\varphi_1,\varphi_2)$ to a moment of $L$-functions. We first observe for any $\omega$ with $\omega^2=1$, $\phi_\omega\varphi_1$ is a cusp form lying in the cuspidal representation $\pi\otimes\omega$. By hypothesis on $\pi_2$, namely that it is not isomorphic to a quadratic twist of $\pi_1$, we see that the contribution of the one-dimensional part is zero. Of course if $\pi_2$ is Eisenstein, this is classical orthogonality result between cusp forms and Eisenstein series.

\vspace{0.1cm}

For the two other parts, let $\pi$ be either cuspidal with $\crm(\pi)|\qmf$ or a principal series of the form $\omega|\cdot|^{it}\boxplus \omega^{-1}|\cdot|^{-it}$ with $\crm(\omega)=\Ocal_\F$ and set $\Bscr(\pi,\qmf)$ for an orthonormal basis of the space $\pi^{\Krm_0(\qmf)}$. We can of course assume that $t\neq 0$ since $t=0$ does not contribute in the integral. Using Proposition \ref{PropositionTrilinear} with the invariant trilinear form given by the inner product, we obtain (The Hecke eigenvalues are real because the central character is trivial)
$$\sum_{\psi\in\Bscr(\pi,\qmf)}\langle \varphi_1^{\pmf\qmf}\varphi_2^\pmf,\psi\rangle\overline{\langle\varphi_1\varphi_2^{\qmf},\psi\rangle}=\frac{p^{1/2}}{p+1}\lambda_\pi(\pmf)\sum_{\psi\in\Bscr(\pi,\qmf)}\langle \varphi_1^{\qmf}\varphi_2,\psi\rangle\overline{\langle\varphi_1\varphi_2^{\qmf},\psi\rangle}.$$
Here $\psi$ is understood to be an Eisenstein series if $\pi$ is not cuspidal. Define
\begin{equation}\label{DefL}
\Lscr(\pi,\qmf):= \sum_{\psi\in\Bscr(\pi,\qmf)}\langle \varphi_1^{\qmf}\varphi_2,\psi\rangle\overline{\langle\varphi_1\varphi_2^{\qmf},\psi\rangle}.
\end{equation}
We have the following proposition.
\begin{proposition}\label{PropositionFact3} Let $\Lscr(\pi,\qmf)$ be defined in \eqref{DefL}. Then
\begin{equation}\label{Fact3}
\Lscr(\pi,\qmf)= \left\{ \begin{array}{lcl}
\eta_\pi(\qmf)\sum_{\psi\in\Bscr(\pi,\qmf)}|\langle \varphi_1^\qmf\varphi_2,\psi\rangle |^2 & \ifm & \crm(\pi)=\qmf \\
 & & \\
\kappa \sum_{\psi \in \Bscr(\pi,\Ocal_\F)}|\langle \varphi_1\varphi_2,\psi\rangle |^2 & \ifm & \crm(\pi)=\Ocal_\F,
\end{array}\right.
\end{equation}
where $\kappa$ is the explicit constant given in \eqref{ValueKappa} which depends on the local datas $\pi_v$ and $\pi_{i,v}$ at the place corresponding to $\qmf$.
\end{proposition}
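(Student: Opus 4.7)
The plan is to treat the two cases separately, combining Proposition~\ref{PropositionTrilinear} with the local--global factorisation furnished by Proposition~\ref{PropositionIntegralRepresentation}.

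In the first case ($\crm(\pi)=\qmf$), Casselman's theory gives $\dim \pi_\qmf^{\Krm_0(\varpi_\qmf)}=1$, so $\Bscr(\pi,\qmf)$ reduces to the unit newvector $\psi$, which is automatically an eigenvector of the local Atkin--Lehner involution with eigenvalue $\eta_\pi(\qmf)$. Since $\pi$ is unitary with trivial central character we have $\pibar\cong\pi$, so the inner product $(\varphi_1,\varphi_2,\phi)\mapsto\langle\varphi_1\varphi_2,\phi\rangle$ can be interpreted, via the identification $\bar\phi\in\pibar\cong\pi$, as a $\GL_2(\Abb_\F)$-invariant trilinear form on $\pi_1\otimes\pi_2\otimes\pi$. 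Applying identity \eqref{PropAtkin} of Proposition~\ref{PropositionTrilinear} then yields
$$\langle\varphi_1\varphi_2^\qmf,\psi\rangle=\eta_\pi(\qmf)\langle\varphi_1^\qmf\varphi_2,\psi\rangle.$$
Plugging this into the definition of $\Lscr$ and using that $\eta_\pi(\qmf)\in\{\pm 1\}$ is real gives the first line of \eqref{Fact3}.

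In the second case ($\crm(\pi)=\Ocal_\F$), the space $\pi_\qmf^{\Krm_0(\varpi_\qmf)}$ is two-dimensional. Since $w_\qmf^2$ is scalar and the central character is trivial, $w_\qmf$ acts on this local space with eigenvalues $\pm 1$. I would pick an orthonormal basis $\{u_+,u_-\}$ of $w_\qmf$-eigenvectors and form $\Bscr(\pi,\qmf)=\{\psi_+,\psi_-\}$ by tensoring $u_\pm$ at $\qmf$ with the spherical vector at every other place, normalised to have canonical norm~$1$; by \eqref{Comparition} the global normalisation agrees with that of the unique element $\psi_{\mathrm{sph}}\in\Bscr(\pi,\Ocal_\F)$. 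The same argument as in Proposition~\ref{PropositionTrilinear} applied to each $\psi_\pm$ (now viewed as an Atkin--Lehner eigenvector with eigenvalue $\pm 1$) gives $\langle\varphi_1\varphi_2^\qmf,\psi_\pm\rangle=\pm\langle\varphi_1^\qmf\varphi_2,\psi_\pm\rangle$, hence
$$\Lscr(\pi,\qmf)=|\langle\varphi_1^\qmf\varphi_2,\psi_+\rangle|^2-|\langle\varphi_1^\qmf\varphi_2,\psi_-\rangle|^2.$$
Now Proposition~\ref{PropositionIntegralRepresentation} computes each of $|\langle\varphi_1^\qmf\varphi_2,\psi_\pm\rangle|^2$ and $|\langle\varphi_1\varphi_2,\psi_{\mathrm{sph}}\rangle|^2$ as the product of global factors depending only on $\pi_1,\pi_2,\pi$ and the local matrix coefficients $I_v$; by \eqref{Fact} each $I_v$ with $v\neq\qmf$ equals $1$ since the vectors are all unramified and spherical there. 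Dividing out the common factors isolates
$$\kappa=\frac{I_\qmf(\tilde e_{1,\qmf}\otimes e^0_{2,\qmf}\otimes u_+)-I_\qmf(\tilde e_{1,\qmf}\otimes e^0_{2,\qmf}\otimes u_-)}{I_\qmf(e^0_{1,\qmf}\otimes e^0_{2,\qmf}\otimes e^0_\qmf)},$$
where $\tilde e_{1,\qmf}$ is the translate of the spherical vector $e^0_{1,\qmf}$ by $\bigl(\begin{smallmatrix}1 & \\ & \varpi_\qmf\end{smallmatrix}\bigr)$, proving the second line of \eqref{Fact3}.

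The hard part will be the explicit evaluation of $\kappa$: one needs local matrix coefficient integrals in the unramified principal series of $\pi_\qmf$ on translates of spherical vectors against the Atkin--Lehner eigenbasis of the two-dimensional space $\pi_\qmf^{\Krm_0(\varpi_\qmf)}$. This is a standard but somewhat intricate computation in terms of the Satake parameters of $\pi_\qmf$ and $\pi_{i,\qmf}$ that should produce the closed formula announced as \eqref{ValueKappa}.
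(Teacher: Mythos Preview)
Your first case is essentially the paper's argument, modulo one slip: $\Bscr(\pi,\qmf)$ is \emph{not} a single vector, since it carries an orthonormal basis at the archimedean places as well. This is harmless, because when $\crm(\pi)=\qmf$ every $\Krm_0(\qmf)$-invariant vector is an Atkin--Lehner eigenvector with the same eigenvalue $\eta_\pi(\qmf)$, so \eqref{PropAtkin} applies to each $\psi$ in the basis and the sum goes through unchanged.

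In the second case your approach is correct but genuinely different from the paper's, and somewhat heavier. The paper does \emph{not} invoke Proposition~\ref{PropositionIntegralRepresentation} here at all. Instead it fixes the archimedean data and observes that what remains at $v=\qmf$ is a sum of the form $\sum_{\varphi_v\in\Bscr(\pi_v,\mmf_v)}\ell(\varphi_{1,v}^{\mmf_v}\otimes\varphi_{2,v}\otimes\varphi_v)\overline{\ell(\varphi_{1,v}\otimes\varphi_{2,v}^{\mmf_v}\otimes\varphi_v)}$ for \emph{some} $\GL_2(\F_\qmf)$-invariant trilinear functional $\ell$. It then takes the Gram--Schmidt basis $\{\varphi^0,\,(\varphi^{0,\mmf}-\kappa(\pi)\varphi^0)/\sqrt{1-\kappa(\pi)^2}\}$ and, using only the Hecke relation \eqref{PropHecke} (which gives $\ell(\varphi^0\otimes\varphi_1^\mmf\otimes\varphi_2)=\kappa(\pi_1)\ell(\varphi^0\otimes\varphi_1\otimes\varphi_2)$, etc.), reduces everything to a scalar multiple of $|\ell(\varphi^0\otimes\varphi_1\otimes\varphi_2)|^2$. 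The value of $\kappa$ in \eqref{ValueKappa} then drops out by pure algebra---no local integral needs to be computed. Your route via the Atkin--Lehner eigenbasis and the Ichino factorisation is valid, but it trades this short algebraic argument for an explicit evaluation of the local matrix coefficients $I_\qmf$ on non-spherical vectors, which you correctly flag as the ``hard part''; the paper's approach sidesteps that computation entirely.
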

\begin{proof}
If the conductor of $\pi$ is $\qmf$, then this is consequence of \eqref{PropAtkin} in Proposition \ref{PropositionTrilinear}. Now assume that $\crm(\pi)=\Ocal_\F$ and write $\Phi : \otimes_v \pi_v\overset{\cong}{\longrightarrow}\pi$ for the isomorphism of $\GL_2(\Abb_\F)$-representations appearing implicitly in Section \ref{SectionInvariant}. For $v|\infty$, let $\Bscr(\pi_v)$ be an orthonormal basis of $\pi_v$ and $\Bscr(\pi_\infty)=\otimes_{v|\infty}\Bscr(\pi_v)$. If $v$ is a finite place and $v\neq\qmf$, the orthonormal basis $\Bscr(\pi_v,\Ocal_{\F_v})$ of the space of $\Krm_{v}$-invariant vector in $\pi_v$ only contains the spherical vector $\varphi_v^0$. If $v=\qmf$, then it is a classical result of Casselman \cite{casselman} that the space $\pi^{\Krm_{0,v}(\varpi_v)}$ has dimension $2$ and is generated by $\varphi_v^0$ and its translate by the matrix $\left( \begin{smallmatrix} 1 & \\ & \varpi_v \end{smallmatrix}\right)$, denoted by $\varphi_v^{0,\mmf_v}$. By \eqref{Comparition} and \eqref{CanonicalNorm}, a global basis $\Bscr(\pi,\qmf)$ can be constructed from the local ones through
$$\Bscr(\pi,\qmf)= \left\{ \frac{\Phi\left(\varphi_\infty\otimes\varphi_\qmf\otimes_{\substack{v<\infty \\ v\neq\qmf}}\varphi_{v}^0\right)}{\mathrm{N}(\pi)^{1/2}} \ : \ \varphi_\infty\in\Bscr(\pi_\infty), \ \varphi_\qmf \in \Bscr(\pi_\qmf,\mmf_\qmf)\right\},$$
with
$$
\mathrm{N}(\pi)=\left\{ \begin{array}{lcl}
2\Delta_\F^{1/2}\Lambda^*(\pi,\mathrm{Ad},1) & \ifm & \pi \ \mathrm{cuspidal} \\ 
 & & \\
\frac{\Delta_\F^{1/2}\Lambda^*(\pi,\mathrm{Ad},1)}{\Lambda_\F^*(1)} & \ifm & \pi \ \mathrm{Eisenstein}.
\end{array}\right.
$$
Hence $\Lscr(\pi,\qmf)$ can be rewritten as
\begin{alignat*}{1}
\Lscr(\pi,\qmf)= & \ \frac{1}{\mathrm{N}(\pi)}\sum_{\varphi_\infty \in\Bscr(\pi_\infty)} \\ \times & \sum_{\varphi_\qmf \in\Bscr(\pi_\qmf,\mmf_\qmf)} \left\langle \varphi_1^\qmf\varphi_2,\Phi\left(\varphi_\infty\otimes\varphi_\qmf\otimes_{ v\neq\qmf}\varphi_{v}^0\right)\right\rangle \overline{\left\langle \varphi_1\varphi_2^\qmf,\Phi\left(\varphi_\infty\otimes\varphi_\qmf\otimes_{ v\neq\qmf}\varphi_{v}^0\right)\right\rangle}.
\end{alignat*}
Fixing the infinite data, we see that the second line can be seen as
\begin{equation}\label{trilinear}
\sum_{\varphi_v\in \Bscr(\pi_v,\mmf_v)}\ell\left(\varphi_{1,v}^{\mmf_v}\otimes\varphi_{2,v}\otimes\varphi_v \right)\overline{\ell\left(\varphi_{1,v}\otimes\varphi_{2,v}^{\mmf_v}\otimes\varphi_v \right)},
\end{equation}
where $v=\qmf$ and $\ell$ is the $\GL_2(\F_v)$-invariant functional given by the composition of the inclusion $\pi_v\otimes\pi_{1,v}\otimes\pi_{2,v}\hookrightarrow \pi\otimes\pi_1\otimes\pi_2$ with the inner product. Now the result follows from the computations made in Section \ref{SectionOf}
\end{proof}
Using successively Propositions \ref{PropositionFact3} and \ref{PropositionIntegralRepresentation}, we obtain the factorization
$$\Lscr(\pi,\qmf)=\frac{C}{2\Delta_\F^{1/2}}\eta_\pi(\qmf)f(\pi_\infty)\frac{L(\pi\otimes\pi_1\otimes\pi_2,\tfrac{1}{2})}{\Lambda^*(\pi,\mathrm{Ad},1)}\prod_{v<\infty} \ell_v,$$
where the constant $C$ is given by
$$C= \left\{ \begin{array}{lcl}
 2\Lambda_\F(2) & \ifm & \pi,\pi_1,\pi_2 \ \mathrm{are \ cuspidal} \\
  & & \\
 2\Lambda_\F^*(1) & \ifm & \pi \ \mathrm{is \ Eisenstein \ and \ nonsingular} \\
  & & \\
 1 & \ifm & \pi \ \mathrm{is \ cuspidal \ and \ }\pi_2=1\boxplus 1,
\end{array}\right.$$
$\eta_\pi(\qmf)$ is the Atkin-Lehner eigenvalue at the place $v=\qmf$ ($\eta_\pi(\qmf)=1$ if $\crm(\pi)=\Ocal_\F$). According to \eqref{Fact} and \eqref{Fact3}, the local factors $\ell_v$ are given by
\begin{equation}\label{LocalFactorlv}
\ell_v = \left\{ \begin{array}{lcl}
1 & \ifm & v\neq \qmf \\ 
 & & \\
 \kappa & \ifm & v=\qmf \ \mathrm{and} \ \crm(\pi)=\Ocal_\F \\ 
  & & \\ 
 I_v(\varphi_{1,v}^{\mmf_v}\otimes\varphi_{2,v}\otimes\varphi_v)  & \ifm & v=\qmf \ \mathrm{and} \ \crm(\pi)=\qmf,
\end{array}\right.
\end{equation}
where $I_v$ is the normalized matrix coefficient defined in \eqref{DefinitionNormalizedMatrixCoefficient} and $\varphi_v$ denotes the new-vector in $\pi_v$ having norm $1$ with respect to $\langle \cdot,\cdot\rangle_v$. Finally, the factor at infinity is equal to
\begin{equation}\label{finfty}
f(\pi_\infty)= \sum_{\varphi_\infty \in \Bscr(\pi_\infty)} I_v(\varphi_{1,\infty}\otimes\varphi_{2,\infty}\otimes\varphi_{\infty})L(\pi_\infty\otimes\pi_{1,\infty}\otimes\pi_{2,\infty},\tfrac{1}{2}).
\end{equation}
It is not obvious but true that this Archimedean function is positive.

\subsection{A reciprocity formula} Let $\qmf$ and $\pmf$ two squarefree and coprime ideals of $\Ocal_\F$ of norms $q$ and $p$ respectively and let $\pi_1,\pi_2$ be two unitary automorphic representations which are unramified at all finite places and such that $\pi_1$ is cuspidal and $\pi_2$ is not isomorphic to a quadratic twist of $\pi_1$. For each unitary automorphic representation $\pi=\otimes\pi_v$ of conductor dividing $\qmf$, we define $\ell(\pi,\qmf)$ by multiplicativity over the finite places $\prod_{v \ \mathrm{finite}}\ell_v(\pi_v,\qmf_v)$, where $\ell_v$ is given in \eqref{LocalFactorlv}. Let $f$ be the non-negative function depending on the Archimedean type of $\pi$, $\pi_1$ and $\pi_2$ defined in \eqref{finfty}. We set $\eta_\pi(\qmf)$ for the Atkin-Lehner eigenvalue, $\lambda_\pi(\pmf)$ for the Hecke eigenvalue and $\Lambda^*(\pi,\mathrm{Ad},1)$ the first nonvanishing coefficient in the Laurent expansion around $s=1$ of the complete adjoint $L$-function.

\vspace{0.1cm}

We define the cuspidal part 
\begin{equation}\label{CuspidalPart}
\mathscr{C}(\pi_1,\pi_2,\qmf,\pmf):= C\sum_{\substack{\pi \ \mathrm{cuspidal} \\ \crm(\pi)|\qmf}}\lambda_\pi(\pmf)\eta_\pi(\qmf)\frac{L(\pi\otimes\pi_1\otimes\pi_2,\tfrac{1}{2})}{\Lambda(\pi,\mathrm{Ad},1)}f(\pi_\infty)\ell(\pi,\qmf).
\end{equation}
For the continuous part, we denote by $\pi_{\omega}(it)$ the principal series $\omega|\cdot|^{it}\boxplus \omegabar|\cdot|^{-it}.$ We then set 
\begin{equation}\label{ContinuousPart}
\begin{split}
\mathscr{E}(\pi_1,\qmf,\pmf):= \frac{C}{4\pi}\sum_{\substack{\omega\in\widehat{\F^\times\setminus \Abb_\F^{1}} \\ \crm(\omega)=\Ocal_\F}}\int_{-\infty}^\infty & \lambda_{\pi_\omega(it)}(\pmf)f(\pi_{\omega_\infty}(it))\ell(\pi_\omega(it),\qmf) \\ \times & \frac{L(\pi_1\otimes\pi_2\otimes\omega,\tfrac{1}{2}+it)L(\pi_1\otimes\pi_2\otimes\omegabar,\tfrac{1}{2}-it)}{\Lambda^*(\pi_\omega(it),\mathrm{Ad},1)} dt,
\end{split}
\end{equation}
and
\begin{equation}\label{DefinitionMoment}
\Mscr(\pi_1,\pi_2,\qmf,\pmf)=:\frac{p^{1/2}}{p+1}\left(\Cscr(\pi_1,\pi_2,\qmf,\pmf)+\mathscr{E}(\pi_1,\pi_2,\qmf,\pmf)\right).
\end{equation}
By Section \ref{SectionApplyParseval}, we have $\Mscr(\pi_1,\pi_2,\qmf,\pmf)=\mathscr{P}_{\qmf,\pmf}(\varphi_1,\varphi_2)$ and the symmetry property \eqref{Symmetry} yields :
\begin{theorem}\label{Theorem} Let $\qmf,\pmf$ two squarefree and coprime ideals of $\Ocal_\F$ and $\pi_1,\pi_2$ be two unitary automorphic representations of $\PGL_2(\Abb_\F)$ which are unramified at all finite places and such that $\pi_1$ is cuspidal and $\pi_2$ is not isomorphic to a quadratic twist of $\pi_1$. Let $\Mscr(\pi_1,\pi_2,\qmf,\pmf)$ be the moment defined in \eqref{DefinitionMoment}. Then
$$\Mscr(\pi_1,\pi_2,\qmf,\pmf)=\Mscr(\pi_1,\pi_2,\pmf,\qmf).$$
\end{theorem}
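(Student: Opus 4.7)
The plan is to exploit the fact that the full machinery has already been built: the period $\mathscr{P}_{\qmf,\pmf}$ has been shown in § \ref{SectionApplyParseval} to admit a spectral expansion that, after the factorizations provided by Propositions \ref{PropositionTrilinear}, \ref{PropositionFact3} and \ref{PropositionIntegralRepresentation}, matches exactly the moment $\Mscr(\pi_1,\pi_2,\qmf,\pmf)$ up to the overall factor $\tfrac{p^{1/2}}{p+1}$ coming from the Hecke eigenvalue identity \eqref{PropHecke}. Thus the first step is simply to record the identity
\[
\Mscr(\pi_1,\pi_2,\qmf,\pmf)=\mathscr{P}_{\qmf,\pmf}(\varphi_1,\varphi_2),
\]
which follows by collecting the cuspidal contribution \eqref{CuspidalPart} and the continuous contribution \eqref{ContinuousPart} produced by the Parseval expansion, noting that the one-dimensional residual spectrum is annihilated by our assumption that $\pi_2$ is not a quadratic twist of $\pi_1$.

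The second step is to apply the tautological symmetry \eqref{Symmetry}, namely
\[
\mathscr{P}_{\qmf,\pmf}(\varphi_1,\varphi_2)=\mathscr{P}_{\pmf,\qmf}(\varphi_1,\overline{\varphi}_2),
\]
which is immediate from the definition \eqref{DefinitionLcal} once one uses that $\pi_i=\overline{\pi}_i$ for $\PGL_2(\Abb_\F)$-representations. This relabels the roles of $\pmf$ and $\qmf$ inside the integral period without changing any analytic quantity.

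The third step is to apply the same Parseval expansion of § \ref{SectionApplyParseval} to the right-hand side $\mathscr{P}_{\pmf,\qmf}(\varphi_1,\overline{\varphi}_2)$. Here one has to check that substituting $\overline{\varphi}_2$ for $\varphi_2$ does not alter the final shape of the spectral expansion: this is clear because $\overline{\varphi}_2$ is still $\GL_2(\widehat{\Ocal}_\F)$-invariant, its local components at every place still have norm $1$ with respect to $\langle \cdot,\cdot\rangle_v$, and when $\pi_2=1\boxplus 1$ the Eisenstein datum is self-conjugate. Consequently the local factors $\ell_v$ from \eqref{LocalFactorlv}, the Archimedean quantity $f(\pi_\infty)$ from \eqref{finfty}, and the constants $C$ remain identical, while the roles of $\pmf$ and $\qmf$ in the Hecke/Atkin--Lehner identities \eqref{PropHecke}--\eqref{PropAtkin} are swapped. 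Hence
\[
\mathscr{P}_{\pmf,\qmf}(\varphi_1,\overline{\varphi}_2)=\Mscr(\pi_1,\pi_2,\pmf,\qmf),
\]
and concatenating the three identities yields the theorem.

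The only subtle point, and the one I would expect to require the most care, is the verification that the bookkeeping in step three is genuine: one must be sure that conjugating $\varphi_2$ does not modify the Euler-product identifications of the local matrix coefficients $I_v$ in \eqref{DefinitionNormalizedMatrixCoefficient} and that the continuous spectrum contribution $\mathscr{E}$ is symmetric under $\omega\leftrightarrow\overline{\omega}$ (which it is, since the integration is over $t\in\Rbb$ and the $L$-factors $L(\pi_1\otimes\pi_2\otimes\omega,\tfrac12+it)L(\pi_1\otimes\pi_2\otimes\overline\omega,\tfrac12-it)$ are invariant under $\omega\mapsto\overline{\omega}$, $t\mapsto -t$). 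Once this invariance is in place, no further computation is needed.
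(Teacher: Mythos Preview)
Your argument is correct and follows exactly the paper's approach: the theorem is stated immediately after the sentence ``By Section \ref{SectionApplyParseval}, we have $\Mscr(\pi_1,\pi_2,\qmf,\pmf)=\mathscr{P}_{\qmf,\pmf}(\varphi_1,\varphi_2)$ and the symmetry property \eqref{Symmetry} yields,'' which is precisely your three-step chain. Your extra care about the harmlessness of conjugating $\varphi_2$ is a welcome clarification but not a deviation from the method.
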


\subsection{A note on the Archimedean test function $f(\pi_\infty)$} Until here, we did not say anything about the function $f$ defined in \eqref{finfty} and appearing in the moments \eqref{CuspidalPart} and \eqref{ContinuousPart}; the only thing we know is that it is non-negative. For many applications in analytic theory of $L$-functions, it is fundamental that $f$ satisfies at least the following property : given $\pi=\pi_\infty\otimes\pi_{\mathrm{fin}}$ a unitary automorphic representation of $\PGL_2(\Abb_\F)$, there exists $\varphi_{i,\infty}\in\pi_{i,\infty}$, $i=1,2$ having norm $1$ and a basis $\Bscr(\pi_\infty)$ (recall that $f$ depends implicitly on these data) such that $f(\pi_\infty)$ is bounded below by a power of the Archimedean conductor $\crm_\infty(\pi).$ It is a result of Michel and Venkatesh \cite[Proposition 3.6.1]{subconvexity} that such a choice exists when for all $v | \infty$, either $\pi_{1,v}$ or $\pi_{2,v}$ is a principal series. In this case, we can obtain the lower bound 
\begin{equation}\label{LowerBound}
f(\pi_\infty)\geqslant \frac{C(\pi_{1,\infty},\pi_{2,\infty},\varepsilon)}{\crm_\infty(\pi)^{1+\varepsilon}},
\end{equation}
for any $\varepsilon>0$ and where $C(\pi_{1,\infty},\pi_{2,\infty},\varepsilon)$ is a positive constant.

\section{Local Computations}
Through this section, we fix $k$ a non-Archimedean local field of characteristic zero and we denote by $\Ocal$ the ring of integers, $\mmf$ the maximal ideal, $\varpi$ a uniformizer, i.e. a generator of $\mmf$ and let $q$ be the cardinality of the residue field $\Ocal/\mmf$. Let $\pi,\pi_1,\pi_2$ be irreducible, smooth admissible, infinite dimensional representations of $\GL_2(k)$ with trivial central character. We assume that $\pi_1$ and $\pi_2$ are unramified and $\pi$ has conductor $\mmf$ or $\Ocal$. In particular, both $\pi_1$ and $\pi_2$ are principal series and $\pi$ is either principal or special, depending on whether $\crm(\pi)=\Ocal$ or $\crm(\pi)=\mmf$ (see for example \cite[Remark 4.25]{gelbart}). We Fix an invariant inner product on the spaces underlying these representations together with an equivariant isometry with their respective Whittaker model endowed with the inner product \eqref{NormalizedInnerProduct}. We also denote by $\Bscr(\pi,\mmf)$ an orthonormal basis of the space of $\Krm_0(\varpi)$-invariant vectors in $\pi$. The goal of this section is to evaluate the local factor $\ell_v$ \eqref{LocalFactorlv} appearing in Section \ref{SectionApplyParseval}. Let $\varphi\in\pi$ and $\varphi_i\in\pi_i$ be the new-vectors having norm $1$.

\subsection{The case $\crm(\pi)=\mmf$} We evaluate here the matrix coefficient $I(\varphi\otimes\varphi_1^\mmf\otimes\varphi_2)$ defined in \eqref{DefinitionNormalizedMatrixCoefficient} as follows : since $\pi_{2}$ is unramified, we can realize $\pi_{2}$ in a principal series representation $\mu\boxplus \mu^{-1}$ with $\mu$ an unramified quasi-character of $k^\times$. Let $f_{2} \in \mu\boxplus \mu^{-1}$ be the new-vector corresponding to $\varphi_{2}$ and having norm $1$ with respect to our fix invariant inner product defined in Section \ref{SectionInvariant}. If $\Wcal(\pi)$ and $\Wcal(\pi_{1})$ denote the respective Whittaker models of $\pi$ and $\pi_{1}$ and $W$ (resp. $W_{1}$) is the Whittaker function associated to $\varphi$ (resp. to $\varphi_{1}$), then we can use \cite[Lemma 3.4.2]{subconvexity} to obtain
$$ I(\varphi,\varphi_1^{\mmf}\otimes\varphi_{2}) =\frac{|\Zcal(W,W_{1}^{\mmf},f_{2})|^2}{L(\pi\otimes\pi_{1}\otimes\pi_{2},\tfrac{1}{2})},$$
where the zeta integral is given by
$$\Zcal(W,W_{1}^{\mmf},f_{2}) = \int_{\Nrm(k)\setminus \PGL_2(k)}WW_{1}^{\mmf}f_{2}.$$
Since $\pi$ has conductor $\mmf$, a direct computation shows that $|W(1)|^2=\zeta_{k}(1)/\zeta_{k}(2)$ (use the fact that $W$ has norm 1 with respect to \eqref{NormalizedInnerProduct} and \cite[(11.14)]{sparse}). The zeta integral is computed explicitely in \cite[Lemma 11.6]{sparse}; the author found
$$\frac{|\Zcal(W,W_{1}^{\mmf},f_{2})|^2}{L(\pi\otimes\pi_{1}\otimes\pi_{2},\tfrac{1}{2})}=\frac{\zeta_{k}(1)}{\zeta_{k}(2)}\frac{q}{(q+1)^2}.$$

\subsection{The case $\crm(\pi)=\Ocal$}\label{SectionOf} According to \eqref{trilinear} in the proof of Proposition \ref{PropositionFact3}, we need to show that if $\ell : \pi\otimes\pi_1\otimes\pi_2 \rightarrow \Cbb$ is any invariant functional, then there exists a constant $\kappa$ such that
\begin{equation}\label{True}
\sum_{v\in\Bscr(\pi,\mmf)}\ell(v\otimes\varphi_1^\mmf\otimes\varphi_2)\overline{\ell(v\otimes\varphi_1\otimes\varphi_2^\mmf)}=\kappa|\ell(\varphi\otimes\varphi_1\otimes\varphi_2)|^2.
\end{equation}
For this, we shall determine explicitly the orthonormal basis $\Bscr(\pi,\mmf)$. It is a famous result of Casselman \cite{casselman} that the space of $\Krm_0(\varpi)$-invariant vectors has dimension two and is generated by $\varphi$ and $\varphi^\mmf$. Using exaclty the same ideas as in Proposition \ref{PropositionTrilinear}, we obtain by \eqref{PropHecke}
\begin{equation}\label{DefinitionKappa}
\langle \varphi,\varphi^\mmf\rangle = \frac{q^{1/2}}{q+1}\lambda_\pi=: \kappa(\pi),
\end{equation}
where for any principal series representation $\tau = \mu_1\boxplus\mu_2$, we write $\lambda_\tau = \mu_1(\varpi)+\mu_2(\varpi)$. Hence we can take
\begin{equation}\label{OrthonormalBasis}
\Bscr(\pi,\mmf)=\left\{ \varphi,\frac{\varphi^\mmf-\kappa(\pi)\varphi}{\sqrt{1-\kappa(\pi)^2}}\right\}.
\end{equation}
Setting $\varsigma=1-\kappa(\pi)^2$, we obtain with the choice \eqref{OrthonormalBasis}
\begin{alignat*}{1}
\sum_{v\in\Bscr(\pi,\mmf)}\ell(v\otimes\varphi_1^\mmf\otimes\varphi_2)\overline{\ell(v\otimes\varphi_1\otimes\varphi_2^\mmf)} = & \ \left(1+\frac{\kappa(\pi)^2}{\varsigma}\right)\ell(\varphi\otimes\varphi_1^\mmf\otimes\varphi_2)\overline{\ell(\varphi\otimes\varphi_1\otimes\varphi_2^\mmf)} \\ 
+ & \ \frac{1}{\varsigma}\ell(\varphi^\mmf\otimes\varphi_1^\mmf\otimes\varphi_2)\overline{\ell(\varphi^\mmf\otimes\varphi_1\otimes\varphi_2^\mmf)} \\
 - & \ \frac{\kappa(\pi)}{\varsigma}\ell(\varphi^\mmf\otimes\varphi_1^\mmf\otimes\varphi_2)\ell(\varphi\otimes\varphi_1\otimes\varphi_2^\mmf) \\
 - & \ \frac{\kappa(\pi)}{\varsigma}\ell(\varphi\otimes\varphi_1^\mmf\otimes\varphi_2)\ell(\varphi^\mmf\otimes\varphi_1\otimes\varphi_2^\mmf).
\end{alignat*}
Using again \eqref{PropHecke}, we see that
$$\ell(\varphi\otimes\varphi_1^\mmf\otimes\varphi_2)=\ell(\varphi^\mmf\otimes\varphi_1\otimes\varphi_2^\mmf)=\kappa(\pi_1)\ell(\varphi\otimes\varphi_1\otimes\varphi_2)$$
and
$$\ell(\varphi\otimes\varphi_1\otimes\varphi_2^\mmf)=\ell(\varphi^\mmf\otimes\varphi_1^\mmf\otimes\varphi_2)=\kappa(\pi_2)\ell(\varphi\otimes\varphi_1\otimes\varphi_2).$$
where we recall that $\kappa(\pi_i)$ is related to the Hecke eigenvalue via \eqref{DefinitionKappa}. Hence \eqref{True} is true with 
\begin{equation}\label{ValueKappa}
\kappa = -\frac{\kappa(\pi)}{\varsigma}\left(\kappa(\pi_1)^2+\kappa(\pi_2)^2\right)+2\frac{\kappa(\pi_1)\kappa(\pi_2)}{\varsigma}.
\end{equation}

\bibliography{Reciprocity}
\bibliographystyle{alpha}

\end{document}